\documentclass[10pt]{article}

\usepackage[T1]{fontenc}
\usepackage[utf8]{inputenc}
\usepackage{lmodern}
\usepackage[margin=1in]{geometry}
\usepackage{amsmath,amssymb,amsthm,mathtools}
\usepackage{graphicx}
\usepackage{xcolor}
\usepackage{booktabs}
\usepackage{caption}
\usepackage{float}
\usepackage{tikz}
\usepackage{pgfplots}
\usepackage[hidelinks]{hyperref}

\pgfplotsset{compat=1.18}
\usepgfplotslibrary{groupplots}
\usetikzlibrary{arrows.meta,calc,intersections}

\theoremstyle{plain}
\newtheorem{Theorem}{Theorem}[section]
\newtheorem{Lemma}[Theorem]{Lemma}

\theoremstyle{definition}
\newtheorem{Definition}{Definition}[section]

\newtheorem{Remark}{Remark}[section]

\newcommand{\E}{\mathbb{E}}
\newcommand{\R}{\mathbb{R}}
\newcommand{\Pp}{\mathbb{P}}

\title{\textbf{The Gittins Index for a Transparent One-Armed\\
Bandit with a Continuous Payoff Spectrum\\
in Equilibrium States}}

\author{%
Marcin Makowski\textsuperscript{1,*},
Edward W. Piotrowski\textsuperscript{2},
and Jan L. Cieśliński\textsuperscript{1}\\[0.6em]
\small\textsuperscript{1}Faculty of Physics, Department of Mathematical Methods in Physics,\\
\small University of Białystok, ul.~Ciołkowskiego 1L, 15-245 Białystok, Poland\\
\small\textsuperscript{2}Independent Researcher, Białystok, Poland\\[0.4em]
\small\textsuperscript{*}Corresponding author: \texttt{m.makowski@uwb.edu.pl}\\
\small E-mail addresses: \texttt{j.cieslinski@uwb.edu.pl};
\texttt{qmgames@gmail.com}\\[0.4em]
\small
}
\date{}

\begin{document}
\maketitle

\begin{abstract}
We present an analogue of the classical Gittins index for a one-armed decision problem with a continuous spectrum of payoffs. The model assumes that the decision-maker observes independent realizations of a random variable and, at each step, decides whether to accept the current opportunity or continue observing. We show that the optimal strategy takes the form of a threshold rule, while the corresponding reservation index is determined by a one-dimensional fixed-point equation with a direct decision-theoretic interpretation. The model is illustrated with an example of bookmaker betting related to horse racing and the Kelly criterion. This perspective allows the proposed index to be viewed as a threshold of informational advantage. This, in turn, points to potential applications in optimal stopping problems and decision-making under uncertainty.
\end{abstract}

\noindent\textbf{Keywords:} Gittins index; optimal stopping; Kelly criterion;
Kullback--Leibler divergence; reinforcement learning

\vspace{1em}
\section{Introduction: The Gittins Index}

The multi-armed bandit problem is a classical problem in probability theory and machine learning, concerning sequential choice among competing sources of random payoffs. At each step, the decision-maker selects one of the arms, observes the outcome, and updates the subsequent strategy on the basis of this information. The central challenge in this class of models is to find a rule that orders the arms according to their current attractiveness in the simplest possible way. To this end, a one-dimensional index is assigned to each arm. In the classical Bayesian setting, the seminal result of John C. Gittins shows that, for an infinite time horizon and exponential discounting, the strategy of selecting the arm with the largest index is optimal \cite{LattimoreSzepesvari2020,Gittins1979}.

What is the index assigned to an arm in a given state? Its classical definition follows from reducing the multi-armed problem to an auxiliary one-armed stopping problem. It is assumed that the state of a single arm evolves as a Markov process \((S_t)_{t\ge 1}\) on a state space \(S\), with transition kernel \(P(x,\cdot)\), where the initial state is \(x\in S\). Each state is associated with an immediate reward \(r(S_t)\), and future payoffs are discounted by a factor \(\alpha\in(0,1)\). If in each round the player may either stop the process or continue activating the arm, then the value of the auxiliary problem can be written as
\[
v_\gamma(x)
:=
\sup_{\tau\in\mathcal T}
\mathbb E_x\!\left[
\sum_{t=1}^{\tau-1}\alpha^{t-1}\bigl(r(S_t)-\gamma\bigr)
\right],
\]
where \(\mathcal T\) denotes the set of stopping times with respect to the natural filtration of the process, taking values in \(\{1,2,\dots\}\). The parameter \(\gamma\) plays the role of a continuation cost or an outside value, whereas the choice \(\tau=1\) corresponds to immediate stopping, and hence to an empty sum and a value equal to \(0\). The Gittins index of state \(x\) is defined as the smallest value of the charge \(\gamma\) at which the decision-maker becomes indifferent between immediate withdrawal and further play, that is,
\[
g(x):=\inf\{\gamma\in\mathbb R:\ v_\gamma(x)=0\}.
\]
Equivalently, this index can be written as
\begin{equation}\label{anG}
g(x)=
\sup_{\tau\ge 2}
\frac{\mathbb E_x\!\left[\sum_{t=1}^{\tau-1}\alpha^{t-1}r(S_t)\right]}
{\mathbb E_x\!\left[\sum_{t=1}^{\tau-1}\alpha^{t-1}\right]}.
\end{equation}
The second form shows that the Gittins index is the maximal discounted average reward attainable by an optimal choice of the stopping time in a one-armed problem \cite{LattimoreSzepesvari2020}.

It is worth explicitly explaining all the elements that make up the above definition. By \(x\in S\) we denote the fixed current state of the arm from which the analysis of the auxiliary stopping problem begins. This does not mean that the state of the arm remains equal to \(x\) throughout. On the contrary, the subsequent evolution is described by the process \((S_t)_{t\ge 1}\), with \(x\) serving only as the initial state, i.e. \(S_1=x\). The Gittins index \(g(x)\) is therefore the value assigned to an arm that is currently in state \(x\). 
In Bayesian models, \(x\) usually takes the form of a concise summary of the observations made so far, containing all information relevant for determining the current posterior distribution. The transition kernel \(P\) of the Markov process describing the evolution of the arm's state defines the probability distribution of the state to which the arm moves after the next activation.

The function \(r\) determines the one-step payoff from activating the arm in a given state. This means that if at time \(t\) the arm is in state \(S_t=x\), then the current activation is associated with the reward \(r(x)\), which enters the sum of discounted payoffs. It is usually assumed that rewards are random. Then, if \(Y_t\) denotes the actual random reward received at the \(t\)-th activation of the arm, and before this activation the arm was in state \(S_t=x\), the function \(r\) assigns to this state the expected value of that reward, namely
\[
r(x)=\mathbb E[Y_t\mid S_t=x].
\]

The parameter \(\alpha\) plays the role of a discount factor in the model. The smaller its value, the greater the weight assigned to current payoffs and the smaller the weight assigned to benefits deferred in time. By \(\tau\) we denote a stopping time, that is, a rule deciding how long it is worth continuing the exploration of a single arm. The parameter \(\gamma\) plays the role of the price of further play with a given arm. It can be viewed as a cost incurred at each decision to continue. The Gittins index is then the threshold price that is still worth paying for the possibility of further play in state \(x\).

In the quotient representation of the Gittins index, the numerator
\[
\mathbb E_x\!\left[\sum_{t=1}^{\tau-1}\alpha^{t-1}r(S_t)\right]
\]
denotes the expected discounted total reward that can be obtained from the present moment until stopping, if the process starts in state \(x\). In turn, the denominator
\[
\mathbb E_x\!\left[\sum_{t=1}^{\tau-1}\alpha^{t-1}\right]
\]
describes the expected discounted length of continuation, that is, the discounted number of future activations of the arm until stopping. The quotient of these two quantities can therefore be interpreted as the expected discounted average reward from further activation of the arm, calculated from the perspective of the current state \(x\).

In the multi-armed model, each arm \(i\) is assigned its current index \(g(S_i(t))\), and then the arm with the largest index is selected. In the classical discounted Bayesian model, such a rule is an optimal policy. As a result, the complex comparison of entire future random paths is reduced to a comparison of real numbers assigned to individual arms \cite{LattimoreSzepesvari2020,Gittins1979}.

\noindent
The Gittins index itself, as well as all kinds of its contemporary analogues and generalizations, are intensively studied tools in broadly understood decision theory. Recent studies have developed methods for computing and approximating the index
\cite{Baas2026,Noba2026} and explored its applications in scheduling and
queueing theory \cite{Ram2026,Har2025}. Index ideas also appear in Bayesian optimization, including problems of costly exploration and stopping rules \cite{Xie2024,Xie2025}, as well as in reinforcement learning \cite{Dhan2024}. Contemporary analogues of the Gittins index are also used in graph exploration \cite{Choo2025}, security games \cite{Ling2026}, platform models \cite{Seo2026}, and search problems with travel costs \cite{Mel2026}. The paper by Scully and Terenin \cite{Scu2025} is of a survey nature, and in it the authors slightly change the perspective on the Gittins index. It is not merely a difficult dynamic programming equation, but a universal principle for designing decision algorithms. 

It is worth noting the broad range of potential applications of methods
based on Gittins' idea. They have been applied in IoT systems, where they
are used, among others, for scheduling data transmission with unmanned
aerial vehicles (UAVs) \cite{JinWanQinZhu2025}, client selection in
so-called decentralized federated learning \cite{CheZhaWanWan2025},
as well as for reducing the costs of distributed machine learning
\cite{EggBitWacGun2025}. Similar ideas can also be found in other areas
of scientific research, such as adaptive clinical trial design
\cite{SmiVil2018}, the theory of reservation prices in optimal stopping
problems \cite{Sei1992}, knowledge-gradient methods, and preference
learning \cite{Fra2009,WuGar2026}. Taken together, these examples provide
a strong motivation for studying the multi-armed bandit problem.

Although the proposed value is referred to as a ``Gittins index analogue'', we must explicitly highlight a fundamental difference from the classical framework. Because our model relies on independent and identically distributed observations without state evolution, the classical exploration--exploitation dilemma is absent. The analogy to Gittins' work lies strictly in the structural methodology of the solution: as in the classical approach, we reduce the multi-armed selection process to an auxiliary one-armed optimal stopping problem. We define the index as a reservation threshold representing the point of indifference, and demonstrate that the multi-armed choice simplifies to picking the arm with the highest index. In this sense, our approach adopts the index-based decision architecture of Gittins' original idea, adapting it to a transparent, stationary setting.

\section{Motivation and Outline of the Paper}

The aim of this paper is to propose a transparent analogue of the classical Gittins index approach for a one-armed decision problem with a continuous spectrum of payoffs. This article refers to the intuition underlying the multi-armed bandit problem, but formulates it in a more transparent model. Unlike the standard Markov model with discounted reward, we consider a model based on a sequence of independent observations and the criterion of average payoff per cycle. These observations may be interpreted in various ways. Since in the article we primarily refer to a market analogy, we shall interpret them as the potential net profit from a transaction, the rate of return from an opportunity, or, more broadly, the payoff associated with a given opportunity. Acceptance means realizing this opportunity and obtaining the payoff, whereas rejection means moving on to the next observation. After entering into a transaction, there is a deterministic time equal to \(1\), interpreted as the time of execution, holding, or settlement of the position. This assumption reflects a natural market constraint on capital liquidity: funds committed to one transaction cannot be used immediately to exploit another opportunity, and the execution and settlement of the position itself require time.

Analogously to the classical Gittins idea, the attractiveness of a given investment opportunity is assessed by analyzing an auxiliary one-armed problem, that is, a problem associated with a single source of offers. The number obtained in this way is then used to compare different sources of market opportunities. As in the case of the Gittins index, this number has the interpretation of a boundary value, the so-called reservation threshold. It is the value of the current investment opportunity at which the investor regards immediate acceptance and continued waiting as equally beneficial. In other words, this threshold determines the boundary between a situation in which it is still worthwhile to wait for a better opportunity and a situation in which the available transaction should already be realized. In this sense, it determines the minimal attractiveness of an opportunity at which entering into the transaction becomes optimal.

The structure of the problem under consideration appears in a broad class of
sequential decisions. Its characteristic elements are the random arrival of
successive opportunities, an immediate and irreversible decision to accept or
reject the current opportunity, the time cost associated with further search,
and a period during which the resource is unavailable after an offer is
accepted. In such situations, the threshold $r^\star$ determines the minimum
value of the current opportunity that compensates for giving up further
search.

A particularly natural interpretation comes from optimal foraging theory. A
predator or consumer encounters successive potential food sources with random
energy values. Rejecting an item means continuing the search, while accepting
it yields energy and requires additional time for capture, preparation, or
consumption. Classical models of diet choice and attack strategies maximize
the long-run rate of energy intake and lead to rules determining which
encountered opportunities should be accepted
\cite{MacArthurPianka1966,Charnov1976,FantinoPreston1988}. In this
interpretation, the symbol $X_n$ used later in the paper may denote the net
value of the $n$-th encountered food source, and the optimal threshold
$r^\star$ is the minimum value that justifies its acceptance.

A similar structure appears in psychological studies of sequential choice and
search. Unlike standard experiments in which all options are presented
simultaneously, in foraging tasks successive opportunities become available
one at a time. The decision-maker must then compare the current benefit with
the opportunity cost of the time required for further search. Studies of
learning in such tasks indicate that the long-run average reward rate may
serve as an adaptive reference level \cite{ConstantinoDaw2015}. The threshold
rule is also related to the concept of satisficing, according to which search
ends when the first opportunity exceeding a fixed aspiration level is found
\cite{Simon1955}. In the model presented here, however, the aspiration level
is not chosen arbitrarily, but results from the solution of the fixed-point
equation and maximizes the average reward per unit time.

From an algorithmic perspective, the model is related to sequential selection
problems. In these problems, values are revealed successively and the
decision-maker must immediately accept or reject each one. Threshold rules
play an important role, among other areas, in prophet inequalities and posted
price mechanisms \cite{CorreaEtAl2021,CorreaEtAl2022}. Unlike fixed-horizon
models, the process considered here renews after each acceptance, and the
optimality criterion is the average reward per unit time. The model can
therefore also be interpreted as a simplified admission control problem:
successively arriving tasks have random values, a rejected task does not
occupy the resource, while an accepted task blocks it for a fixed service
time.

The construction presented here is not intended to reproduce the full
complexity of each of the applications mentioned above, but should be treated
as a deliberately simplified baseline model. Temporal dependence,
non-stationarity, random service time, switching costs, and dependence between
successive opportunities lead to natural extensions in which the acceptance
threshold may depend on time, state, or the history of the process.

The importance of simple reference models of this kind lies in isolating the
fundamental decision mechanism and determining its consequences under
conditions that allow rigorous analytical results to be obtained. This makes
it possible to distinguish properties resulting from the structure of the
sequential choice problem itself from effects caused by additional
assumptions, such as non-stationarity, dependence between observations, or
variable service time. An exact solution of a simple model also provides a
benchmark for evaluating more complex models, numerical algorithms, and
learning procedures, and yields transparent hypotheses that can subsequently
be investigated empirically.

The main motivation of the paper is to examine, within a simplified and
analytically solvable model, the fundamental mechanism connecting the value of
the current opportunity with the cost of further waiting and the service time
following its acceptance. In the following part, we show that the resulting
decision problem can be reduced to a single optimal reservation threshold. An
exact solution of such a model provides a reference point for more complex
extensions, threshold learning procedures, and comparisons of different
sources of opportunities.

The remainder of the article is organized as follows. In Section~\ref{opt}, we introduce the one-armed stopping model and determine the optimal threshold rule together with the corresponding reservation threshold. Next, in Section~\ref{secW}, we move to the multi-armed index model and show that the selection problem reduces to comparing the reservation indices of the individual arms. In Section~\ref{ucz}, we discuss learning the reservation threshold when the distribution of observations is unknown. In Section~\ref{kel12}, we present an illustration of the model using the example of a bookmaker bet based on the Kelly criterion, and in Section~\ref{dyw}, we show the connection of this example with the Kullback--Leibler divergence and the informational interpretation of the threshold rule. The article concludes with Section~\ref{last}, which contains final remarks and possible directions for further research.

\section{Optimal stopping rule}\label{opt}

The idea behind the construction below refers to the works \cite{Pio,Mak}, devoted to the maximization of profit intensity. We show that the problem of an optimal investment decision can be formulated as an optimal stopping problem for an appropriately constructed stochastic process. However, the optimal rule presented here is general and does not depend on the adopted interpretation.

Consider a sequence $\{X_n\}_{n \ge 1}$ of real-valued, independent random variables with the same distribution (i.i.d.), consistent with the distribution of a random variable $X$. At successive times $n = 1, 2, \ldots$, we observe the realization $X_n$ and make one of two decisions:
\begin{itemize}
    \item \textbf{accept} (stop) -- we take the current value as the reward,
    \item \textbf{reject} (continue) -- we give up the current value and move on to the next observation.
\end{itemize}
The decision is made on the basis of the currently available information. Each observation consumes exactly $1$ unit of time. When we decide to accept the value $X_n$, we receive a reward equal to $X_n$, and then incur a deterministic service-time cost equal to $1$ unit. Then the entire cycle renews from the beginning.

We define the length of the cycle as $L:=\tau+1$, and the reward as $R:=X_\tau$.
The average payoff criterion takes the form
\begin{equation}\label{sre}
g(\tau):=\frac{\mathbb E[R]}{\mathbb E[L]}=\frac{\mathbb E[X_\tau]}{\mathbb E[\tau+1]}.
\end{equation}
The aim is to maximize the above value, analogously to the Gittins problem in (\ref{anG}). 
For $r\in\mathbb R$, a threshold rule is the stopping time
\[
\tau_r:=\inf\{n\ge 1:\ X_n\ge r\}.
\]

\noindent
In what follows, we write $g(r):=g(\tau_r)$, i.e. the value of the average payoff criterion for the threshold rule with threshold $r$.

\begin{Remark}
\noindent \textbf{Assumption on service time.} After the stopping time $\tau$ (acceptance), there is a deterministic service period, which we normalize to $1$ unit of time. This means that the system becomes unavailable for the duration of this activity, and only after its completion is it possible to return to the observation process. This assumption models the so-called dead time or refractory period of the system, a necessary time cost associated with finalizing the decision (e.g. executing an order, physically serving a customer), during which the system cannot process new requests.
\end{Remark}
\subsection{Auxiliary Results}
\vspace{0.2cm}
\noindent
Let
\begin{equation} \label{ozn1}
(X-r)^+ := \max\{X-r,\,0\} = (X-r)\,\mathbf 1_{\{X\ge r\}}.
\end{equation} 
In particular,
\[
X^+:=(X-0)^+=\max\{X,0\}\,.
\]
The quantity $\E[(X-r)^+]$ is the expected excess of the variable $X$ above the threshold $r$.

\begin{Lemma}\label{L1}
Let $\E[X^+]<\infty$. There exists exactly one real number $r^\star$ satisfying the equation
\begin{equation}\label{roww1}
\E[(X-r^\star)^+]-r^\star=0
\end{equation}
and it is nonnegative, i.e. $r^\star\in[0,\infty)$. 
\end{Lemma}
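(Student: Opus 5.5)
We will study the function
\[
\varphi(r):=\E[(X-r)^+]-r,\qquad r\in\R,
\]
and show that it is continuous, strictly decreasing, nonnegative at $r=0$, and negative for large $r$. Existence and uniqueness of the root $r^\star$ of \eqref{roww1}, together with $r^\star\ge 0$, then follow from the intermediate value theorem.

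\textbf{Well-definedness.} First we check that $\varphi$ is finite on all of $\R$. For every $r$ we have $0\le (X-r)^+\le X^+ + r^-$, where $r^-:=\max\{-r,0\}$. Indeed, if $X\ge r$ then $X-r\le X^+ + r^-$. Hence the assumption $\E[X^+]<\infty$ gives $\E[(X-r)^+]<\infty$. Note that no integrability of $X^-$ is needed.

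\textbf{Monotonicity and continuity.} For $r<s$, the map $x\mapsto (x-r)^+$ is nonincreasing in $r$ and $1$-Lipschitz in $r$:
\[
0\le (x-r)^+-(x-s)^+\le s-r .
\]
Taking expectations gives $0\le \E[(X-r)^+]-\E[(X-s)^+]\le s-r$. Therefore
\[
\varphi(r)-\varphi(s)\ge s-r>0,
\]
so $\varphi$ is strictly decreasing. It is also continuous, in fact Lipschitz with constant at most $2$. Strict monotonicity yields uniqueness of any root.

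\textbf{Sign information.} At $r=0$ we have $\varphi(0)=\E[X^+]\ge 0$. For large $r$, the bound $(X-r)^+\le X^+$ for $r\ge 0$ gives $\varphi(r)\le \E[X^+]-r$, which is negative as soon as $r>\E[X^+]$. By continuity there is a root in $[0,\E[X^+]+1]$, so $r^\star$ exists. Since $\varphi$ is strictly decreasing with $\varphi(r)>\varphi(0)\ge 0$ for $r<0$, the root cannot be negative, and hence $r^\star\ge 0$. In the degenerate case $\E[X^+]=0$, i.e.\ $X\le 0$ a.s., the root is $r^\star=0$.

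\textbf{Expected obstacle.} There is no real difficulty. The only point needing care is that $\varphi$ is finite without assuming $\E|X|<\infty$, which is handled by the domination $(X-r)^+\le X^++r^-$. The Lipschitz estimate then avoids any appeal to dominated convergence.
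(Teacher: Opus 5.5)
Your proof is correct and follows the paper's argument essentially step for step. It uses the same function $\varphi$, the same domination $(X-r)^+\le X^+ + (-r)^+$ for finiteness, the same $2$-Lipschitz and strict-decrease estimates, and the intermediate value theorem between $r=0$ and large $r$. The only differences are cosmetic: you treat $\E[X^+]=0$ and $\E[X^+]>0$ together via $\varphi(0)\ge 0$, and you obtain $r^\star\ge 0$ from monotonicity rather than from the identity $r^\star=\E[(X-r^\star)^+]\ge 0$.
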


\begin{proof}
For $r\in\R$, define the function
\[
\phi(r):=\E[(X-r)^+]-r.
\]
Since
\[
(X-r)^+ \le X^+ + (-r)^+,
\]
the assumption $\E[X^+]<\infty$ implies that $\E[(X-r)^+]<\infty$ for every $r$, and hence $\phi(r)$ takes finite values
and $\phi(r)\in\R$.

For arbitrary $r,s\in\R$, the inequality
\[
\big|(X-r)^+-(X-s)^+\big|
\le \big|(X-r)-(X-s)\big|
=|r-s|
\]
holds.
We have
\[
|\phi(r)-\phi(s)|
= \left| \big(\E[(X-r)^+]-\E[(X-s)^+]\big) -(r-s)\right|
\]
and, by the triangle inequality,
\[
|\phi(r)-\phi(s)|
\le \left|\E[(X-r)^+]-\E[(X-s)^+]\right|+|r-s|
\le |r-s|+|r-s|
=2|r-s|.
\]
Thus $\phi$ is continuous. If $s>r$, then $(X-s)^+\le (X-r)^+$, and therefore
\[
\phi(s)-\phi(r)=\E[(X-s)^+-(X-r)^+]-(s-r)\le -(s-r)<0,
\]
so $\phi$ is strictly decreasing. 

We now show that $\phi$ has a unique zero. If $\E[X^+]=0$, then
\[
\phi(0)=\E[X^+]=0,
\]
so $r^\star=0$. 

Assume now that $\E[X^+]>0$. Then
\[
\phi(0)=\E[X^+]>0.
\]
Moreover, for $r\ge 0$,
\[
0\le (X-r)^+\le X^+,
\]
and hence
\[
\phi(r)=\E[(X-r)^+]-r\le \E[X^+]-r \xrightarrow[r\to+\infty]{}-\infty.
\]
Since $\phi$ is continuous, the intermediate value theorem yields some
$r^\star>0$ such that $\phi(r^\star)=0$. Since $\phi$ is strictly decreasing,
this zero is unique.

Finally, the equality $\phi(r^\star)=0$ is equivalent to
\[
r^\star=\E[(X-r^\star)^+],
\]
so in particular $r^\star\ge 0$.
\end{proof}

\vspace{0.4cm}
\noindent

We will show that the optimization of the average payoff (\ref{sre}) within the class of threshold
rules reduces to the maximization (with respect to the threshold $r$) of the one-dimensional function $\operatorname{rap}(r)$ :
$$\operatorname{rap}(r):= \frac{N(r)}{1+S(r)}\,,$$
where, for $r \in \R$,
\begin{equation}\label{eq:notations_r}
\begin{aligned}
S(r) &:= \Pp(X\ge r),\qquad
N(r) := \E\!\big[X\,\mathbf 1_{\{X\ge r\}}\big].\qquad
\end{aligned}
\end{equation}

By an equilibrium state, we mean a stationary threshold rule \(\tau_r\)
for which the value of the reservation threshold \(r\) agrees with the
average payoff per cycle obtained by applying the same rule, that is,
\[
\operatorname{rap}(r)=r.
\]
Thus, this term refers to a fixed point of the threshold mapping and not
to a stationary distribution of the learning process.

\begin{Lemma}\label{lem:geom}
Let $\{X_n\}_{n\ge 1}$ be independent random variables with the same distribution as the random variable $X$, and let $r\in\R$ be such that $S(r)>0$ and $\E[X^+]<\infty$.
Then the time $\tau_r:=\inf\{n\ge 1:\ X_n\ge r\}$ is finite, $\E[\tau_r]<\infty$, and
\[
g(\tau_r)=\frac{\E[X_{\tau_r}]}{\E[\tau_r+1]}=\frac{N(r)}{1+S(r)}=\operatorname{rap}(r).
\]
\end{Lemma}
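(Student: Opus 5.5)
The plan is to exploit the geometric structure of the threshold rule. Write $p:=S(r)=\Pp(X\ge r)>0$. Since the events $\{X_n\ge r\}$ are independent with common probability $p$, we have $\Pp(\tau_r>n)=(1-p)^n$ for every $n\ge 0$. This tends to $0$, so $\tau_r<\infty$ almost surely. Moreover $\tau_r$ is geometric with parameter $p$, so
\[
\E[\tau_r]=\sum_{n\ge 0}(1-p)^n=\frac1p<\infty .
\]

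Next I would compute $\E[X_{\tau_r}]$ by decomposing over the value of $\tau_r$. For each $n$,
\[
\{\tau_r=n\}=\{X_1<r,\dots,X_{n-1}<r,\ X_n\ge r\}.
\]
By independence,
\[
\E\bigl[X_{\tau_r}\mathbf 1_{\{\tau_r=n\}}\bigr]=(1-p)^{n-1}\,\E\bigl[X\mathbf 1_{\{X\ge r\}}\bigr]=(1-p)^{n-1}N(r).
\]
Summing over $n\ge1$ gives $\E[X_{\tau_r}]=N(r)/p$. The step requiring care is justifying this interchange of sum and expectation, since $X$ may take negative values. I would first check that $N(r)$ is finite. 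The positive part satisfies $\E[X^+\mathbf 1_{\{X\ge r\}}]\le \E[X^+]<\infty$. The negative part $X^-\mathbf 1_{\{X\ge r\}}$ is nonzero only when $r\le X<0$, so it is bounded by $(-r)^+$. Hence $X\mathbf 1_{\{X\ge r\}}$ is integrable. The same computation applied to $|X_{\tau_r}|$ gives $\sum_n (1-p)^{n-1}\E[|X|\mathbf 1_{\{X\ge r\}}]<\infty$. Fubini (or dominated convergence) then justifies splitting $\E[X_{\tau_r}]=\sum_n \E[X_{\tau_r}\mathbf 1_{\{\tau_r=n\}}]$, and $\E[X_{\tau_r}]$ is well defined and finite.

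Finally I would combine the two computations:
\[
g(\tau_r)=\frac{N(r)/p}{1/p+1}=\frac{N(r)}{1+p}=\frac{N(r)}{1+S(r)}=\operatorname{rap}(r).
\]
An alternative route for the middle step is to note that, conditionally on $\tau_r=n$, $X_{\tau_r}$ has the law of $X$ given $X\ge r$. This gives $\E[X_{\tau_r}]=N(r)/S(r)$ directly. I would nevertheless present the explicit series, because it makes the integrability check transparent.
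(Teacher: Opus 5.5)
Your proposal is correct and follows essentially the same route as the paper. Both use the geometric law of $\tau_r$ to get $\E[\tau_r]=1/S(r)$, decompose $\E[X_{\tau_r}]$ over the events $\{\tau_r=n\}$ using independence, and check integrability separately. The only difference is cosmetic: you bound $\E|X_{\tau_r}|$ by applying Tonelli to the series with $|X|\mathbf 1_{\{X\ge r\}}$, which also explicitly justifies the signed interchange, whereas the paper treats the positive part by monotone convergence and the negative part via the pointwise bound $X_{\tau_r}^-\le(-r)^+$.
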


\begin{proof}
Set $p:=S(r)\in(0,1]$ and $q:=1-p=\Pp(X<r)\in[0,1)$.

Using the assumption that the random variables $X_i$ are independent, for $n\ge 0$ we have
\[
\Pp(\tau_r>n)=\Pp(X_1<r,\dots,X_n<r)=q^n.
\]
Thus the random variable $\tau_r$ has the geometric distribution on $\{1,2,\dots\}$ with parameter $p$:
\[
\Pp(\tau_r=k)=q^{k-1}p,\qquad k\ge 1.
\]
In particular, $\Pp(\tau_r<\infty)=1$.Using the formula for the sum of a geometric series, we obtain
\begin{equation} \label{skon}
\E[\tau_r]=\sum_{n=0}^\infty \Pp(\tau_r>n)=\sum_{n=0}^\infty q^n=\frac1p,
\qquad
\E[\tau_r+1]=1+\E[\tau_r]=1+\frac1p=\frac{1+p}{p}.
\end{equation}
Observe that the random variable $X_{\tau_r}$ is integrable. 
Since the events $\{\tau_r=k\}$, $k\ge 1$, are pairwise disjoint and
$\Pp(\tau_r<\infty)=1$, we have
\[
X_{\tau_r}^+
=
\sum_{k=1}^\infty X_k^+\mathbf 1_{\{\tau_r=k\}}\,.
\]
Hence, using the monotone convergence theorem, which allows us to interchange the expectation of a nonnegative increasing limit with the limit of the expectations, we obtain
\[
\E[X_{\tau_r}^+]
=
\sum_{k=1}^\infty
\E\big[X_k^+\mathbf 1_{\{\tau_r=k\}}\big]
=
\sum_{k=1}^\infty
q^{k-1}\E\big[X^+\mathbf 1_{\{X\ge r\}}\big]
=
\frac{\E[X^+\mathbf 1_{\{X\ge r\}}]}{p}
<\infty.
\]
By $Y^-:=\max\{-Y,0\}$, denote the negative part of the random variable $Y$. On the event $\{\tau_r<\infty\}$, we have $X_{\tau_r}\ge r$, and therefore
\[
X_{\tau_r}^- \le (-r)^+.
\]
Hence
\[
\E[X_{\tau_r}^-]\le (-r)^+<\infty.
\]
Thus $\E[|X_{\tau_r}|]<\infty$.

From the definition of $\tau_r$, it follows that
\[
\{\tau_r=k\}=\{X_1<r,\dots,X_{k-1}<r,\ X_k\ge r\},
\]
and hence
\[
\mathbf 1_{\{\tau_r=k\}}
=\mathbf 1_{\{X_k\ge r\}}\prod_{j=1}^{k-1}\mathbf 1_{\{X_j<r\}}.
\]
Consequently,
\[
\E[X_{\tau_r}]
=\sum_{k=1}^\infty \E\Big[X_k\,\mathbf 1_{\{X_k\ge r\}}\prod_{j=1}^{k-1}\mathbf 1_{\{X_j<r\}}\Big].
\]
By the independence of the variables $X_1,\dots,X_k$ and the assumption that they are identically distributed, we obtain:
\[
\E\Big[X_k\,\mathbf 1_{\{X_k\ge r\}}\prod_{j=1}^{k-1}\mathbf 1_{\{X_j<r\}}\Big]
=\E\big[X\,\mathbf 1_{\{X\ge r\}}\big]\cdot \Big(\Pp(X<r)\Big)^{k-1}
= N(r)\,q^{k-1}.
\]
Thus
\[
\E[X_{\tau_r}]
=\sum_{k=1}^\infty N(r)\,q^{k-1}
=\frac{N(r)}{1-q}
=\frac{N(r)}{S(r)}.
\]
Substituting the above into the definition (\ref{sre}), we get
\[
g(\tau_r)=\frac{\E[X_{\tau_r}]}{\E[\tau_r+1]}
=\frac{N(r)/S(r)}{(1+S(r))/S(r)}
=\frac{N(r)}{1+S(r)}
=\operatorname{rap}(r).
\]
\end{proof}

\subsection{Optimal Reservation Threshold}
\noindent
\vspace{0.3cm}
The functional
\begin{equation}\label{fun}
J_\lambda(r):=\E\big[X_{\tau_r}-\lambda(\tau_r+1)\big]
\end{equation}
is the standard transformation of a quotient problem into a difference form,
cf. the classical Dinkelbach method for fractional programming
\cite{Din,Scha}. In our case, for every threshold \(r\) such that
\(S(r)>0\), we have
\begin{equation}\label{eq:J_identity}
J_\lambda(r)
=
\E[\tau_r+1]\left(
\frac{\E[X_{\tau_r}]}{\E[\tau_r+1]}-\lambda
\right)
=
\E[\tau_r+1]\bigl(g(\tau_r)-\lambda\bigr)= \E[\tau_r+1]\bigl(\operatorname{rap}(r)-\lambda\bigr).
\end{equation}
Since \(\E[\tau_r+1]>0\), the sign of \(J_\lambda(r)\) is the same as the sign of
$g(\tau_r)-\lambda=\operatorname{rap}(r)-\lambda$. 
In the proof of the theorem below, we will apply this observation for
\(\lambda=r^\star\), where \(r^\star\) is the unique solution of the equation
\[
\E[(X-r)^+]-r=0.
\]
We will show that the solution of the above equation determines the optimal reservation threshold within the class of threshold rules.
\begin{Theorem}
Let $\{X_n\}_{n\ge 1}$ be i.i.d. with the same distribution as $X$, let $\E[X^+]<\infty$, and let $S(0)>0$.
Denote by $r^\star\in[0,\infty)$ the unique solution of the equation
\[
\E[(X-r^\star)^+]-r^\star=0.
\]
Then
\begin{equation}\label{eq:sup_rap}
\sup_{r:\,S(r)>0}\operatorname{rap}(r)
=r^\star,
\end{equation}
and
\begin{equation}\label{eq:fixed_point}
\operatorname{rap}(r^\star)=r^\star.
\end{equation}
Therefore, \(r^\star\) determines the unique equilibrium state.
\end{Theorem}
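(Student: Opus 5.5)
The approach is the Dinkelbach-type identity \eqref{eq:J_identity} combined with Lemmas \ref{L1} and \ref{lem:geom}. Everything reduces to one pointwise algebraic identity. For any $r$ with $S(r)>0$, Lemma \ref{lem:geom} gives $\E[\tau_r+1]=(1+S(r))/S(r)$ and $\E[X_{\tau_r}]=N(r)/S(r)$. Hence, for any real $\lambda$,
\[
S(r)\,J_\lambda(r)=N(r)-\lambda\bigl(1+S(r)\bigr)=\E\bigl[(X-\lambda)\mathbf 1_{\{X\ge r\}}\bigr]-\lambda .
\]
The key observation is that, for fixed $\lambda$, the right-hand side is maximized over $r$ at $r=\lambda$. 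Indeed, $(x-\lambda)\mathbf 1_{\{x\ge r\}}\le (x-\lambda)^+$ pointwise, with equality when $r=\lambda$. Therefore
\[
N(r)-\lambda(1+S(r))\le \E[(X-\lambda)^+]-\lambda=\phi(\lambda),
\]
where $\phi$ is the function from the proof of Lemma \ref{L1}.

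Now take $\lambda=r^\star$. Since $\phi(r^\star)=0$, we get $N(r)\le r^\star(1+S(r))$, that is, $\operatorname{rap}(r)\le r^\star$ for every admissible $r$. Before evaluating at $r=r^\star$ itself, I would check that this threshold is admissible, i.e.\ $S(r^\star)>0$. If $r^\star=0$, this is exactly the hypothesis $S(0)>0$. If $r^\star>0$, then $\E[(X-r^\star)^+]=r^\star>0$ forces $\Pp(X>r^\star)>0$. At $r=r^\star$ the pointwise inequality is an equality, so $N(r^\star)-r^\star(1+S(r^\star))=\phi(r^\star)=0$. This gives \eqref{eq:fixed_point}, and since the bound is attained, the supremum in \eqref{eq:sup_rap} is a maximum equal to $r^\star$.

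For uniqueness of the equilibrium, suppose $\operatorname{rap}(r)=r$ with $S(r)>0$. Then $N(r)=r(1+S(r))$, which rearranges to $\E[(X-r)\mathbf 1_{\{X\ge r\}}]-r=0$. Because $(X-r)\mathbf 1_{\{X\ge r\}}=(X-r)^+$ exactly (the term vanishes at $X=r$), this says $\phi(r)=0$. By Lemma \ref{L1}, $r=r^\star$.

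The only delicate points are the admissibility of $r^\star$ in the degenerate case $r^\star=0$, which is why the hypothesis $S(0)>0$ is needed, and keeping track of integrability. The latter is handled because $N(r)$ is finite whenever $\E[X^+]<\infty$ and $S(r)>0$: the negative part of $X\mathbf 1_{\{X\ge r\}}$ is bounded by $(-r)^+$. The rest is the algebraic comparison above, so I expect no real obstacle beyond stating that comparison cleanly.
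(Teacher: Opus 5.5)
Your proof is correct and follows the paper's argument essentially step for step: the Dinkelbach identity with $\lambda=r^\star$, the pointwise bound $(X-r^\star)\mathbf 1_{\{X\ge r\}}\le (X-r^\star)^+$ with equality at $r=r^\star$, and the admissibility check $S(r^\star)>0$, which you do by cases where the paper argues by contradiction. You also show explicitly that any fixed point $\operatorname{rap}(r)=r$ with $S(r)>0$ forces $\phi(r)=0$, hence $r=r^\star$ by Lemma~\ref{L1}; this fills in the uniqueness of the equilibrium, which the paper only asserts.
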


\begin{proof}
Let \(r\in\mathbb R\) be such that \(S(r)>0\). By Lemma~\ref{lem:geom}, we have
\[
\E[X_{\tau_r}]=\frac{N(r)}{S(r)},
\qquad
\E[\tau_r+1]=\frac{1+S(r)}{S(r)}.
\]
Hence
\[
J_\lambda(r)
=
\frac{N(r)-\lambda S(r)-\lambda}{S(r)}
=
\frac{\E[(X-\lambda)\mathbf 1_{\{X\ge r\}}]-\lambda}{S(r)}.
\]
Substituting \(\lambda=r^\star\), we obtain
\[
J_{r^\star}(r)
=
\frac{\E[(X-r^\star)\mathbf 1_{\{X\ge r\}}]-r^\star}{S(r)}.
\]
Since
\[
(X-r^\star)\mathbf 1_{\{X\ge r\}}
\le
(X-r^\star)^+,
\]
it follows from the definition of \(r^\star\) that
\[
J_{r^\star}(r)
\le
\frac{\E[(X-r^\star)^+]-r^\star}{S(r)}
=0.
\]
By \eqref{eq:J_identity}, we therefore obtain
\[
\operatorname{rap}(r)\le r^\star
\qquad
\text{for every } r \text{ such that } S(r)>0.
\]
Thus
\[
\sup_{r:\,S(r)>0}\operatorname{rap}(r)\le r^\star.
\]

It remains to verify that the threshold \(r^\star\) is admissible, i.e. $S(r^\star)>0$, and attains this value.
If \(S(r^\star)=0\), then \((X-r^\star)^+=0\), and hence the equation
\[
\E[(X-r^\star)^+]-r^\star=0
\]
would imply \(r^\star=0\). But then
$S(r^\star)=S(0)>0$,
which gives a contradiction. 

For \(r=r^\star\), we have
\[
J_{r^\star}(r^\star)
=
\frac{\E[(X-r^\star)\mathbf 1_{\{X\ge r^\star\}}]-r^\star}{S(r^\star)}
=
\frac{\E[(X-r^\star)^+]-r^\star}{S(r^\star)}
=0.
\]
Using \eqref{eq:J_identity} again, we obtain
\[
\operatorname{rap}(r^\star)=r^\star.
\]
Consequently,
\[
\sup_{r:\,S(r)>0}\operatorname{rap}(r)
=
r^\star.
\]
\end{proof}

\begin{Remark}
The assumption \(S(0)>0\) serves only to exclude the degenerate case
and ensures that the threshold \(r^\star\) is admissible, i.e. \(S(r^\star)>0\).
\end{Remark}

\begin{Remark}[One-armed interpretation]
The obtained optimal threshold $r^\star$ can be interpreted as the reservation value
of a single source of offers. The rule
\[
\tau_{r^\star}=\inf\{n\ge 1:\ X_n\ge r^\star\}
\]
compares the current offer \(X_n\) with the reservation threshold \(r^\star\). This threshold determines the boundary value
at which accepting the offer and continuing to wait are equivalent. This follows from the equation
\[
\E[(X-r^\star)^+]=r^\star,
\]
which shows that the expected excess attainable as a result of further waiting is equal to the reservation threshold. The one-armed solution presented above
constitutes an elementary building block for the multi-armed model,
in which each arm has its own reservation value.
\end{Remark}

It is worth emphasizing that the threshold rule associated with
$r^\star$ is optimal within the class of all admissible stopping
times, and not only within the class of threshold rules.

By an admissible stopping time, we mean a stopping time $\tau$ with
respect to the natural filtration
\[
\mathcal F_n=\sigma(X_1,\dots,X_n),
\]
taking values in $\{1,2,\ldots\}\cup\{\infty\}$ and satisfying
\[
\E[\tau]<\infty.
\]
This condition already implies that
\[
\Pp(\tau<\infty)=1,
\]
so no separate almost-sure finiteness assumption is required. The
condition $\E[\tau]<\infty$ is natural in the present
renewal-reward setting because it guarantees a finite expected cycle
length, which appears in the denominator of the average-payoff
criterion. Allowing stopping times with infinite expected cycle
length would require a different asymptotic performance criterion
and is outside the scope of the present model.

Let $\tau$ be any admissible stopping time. We first show that the
positive part of the stopped payoff relative to $r^\star$ is
integrable. Since $\Pp(\tau<\infty)=1$, Tonelli's theorem gives
\[
\E[(X_\tau-r^\star)^+]
=
\sum_{n\geq 1}
\E\!\left[
    (X_n-r^\star)^+
    \mathbf 1_{\{\tau=n\}}
\right].
\]
Because
\[
\{\tau=n\}\subseteq\{\tau\geq n\},
\]
we obtain
\[
\E[(X_\tau-r^\star)^+]
\leq
\sum_{n\geq 1}
\E\!\left[
    (X_n-r^\star)^+
    \mathbf 1_{\{\tau\geq n\}}
\right].
\]
The event $\{\tau\geq n\}$ belongs to $\mathcal F_{n-1}$ and is
therefore determined before $X_n$ is observed. Since the observations
are independent and identically distributed, the event
$\{\tau\geq n\}$ is independent of $X_n$, and $X_n$ has the same
distribution as $X$. Consequently,
\[
\begin{aligned}
\E[(X_\tau-r^\star)^+]
&\leq
\sum_{n\geq 1}
\E[(X_n-r^\star)^+]\Pp(\tau\geq n) \\
&=
\E[(X-r^\star)^+]
\sum_{n\geq 1}\Pp(\tau\geq n) \\
&=
r^\star\E[\tau]
<\infty,
\end{aligned}
\]
where we used the defining equation
\[
\E[(X-r^\star)^+]=r^\star
\]
and the tail-sum identity
\[
\sum_{n\geq 1}\Pp(\tau\geq n)=\E[\tau].
\]

It follows that the positive part of $X_\tau-r^\star$ has finite
expectation. Hence $\E[X_\tau-r^\star]$ is well defined as an
extended real number in $[-\infty,\infty)$. In particular, neither
the value $+\infty$ nor an indeterminate expression of the form
$\infty-\infty$ can occur. Therefore,
\begin{equation}\label{osza}
\E[X_\tau-r^\star]
\leq
\E[(X_\tau-r^\star)^+]
\leq
r^\star\E[\tau].
\end{equation}
Thus,
\[
\E[X_\tau]
\leq
r^\star\bigl(\E[\tau]+1\bigr)
=
r^\star\E[\tau+1].
\]

If $\E[X_\tau]=-\infty$, we use the convention
\[
g(\tau)=-\infty.
\]
Otherwise, the average payoff is defined in the usual way. Since
\[
0<\E[\tau+1]<\infty,
\]
we obtain in both cases
\[
g(\tau)
=
\frac{\E[X_\tau]}{\E[\tau+1]}
\leq
r^\star.
\]

Finally, consider the threshold stopping time
\[
\tau_{r^\star}
=
\inf\{n\geq 1:\ X_n\geq r^\star\}.
\]
Since $S(r^\star)>0$, this stopping time has a geometric distribution
with success probability $S(r^\star)$, and therefore
\[
\E[\tau_{r^\star}]
=
\frac{1}{S(r^\star)}
<\infty.
\]
Thus, $\tau_{r^\star}$ is admissible. Moreover, it attains the value
\[
g(\tau_{r^\star})=r^\star.
\]
Consequently, $\tau_{r^\star}$ is optimal within the class of all
admissible stopping times, and the optimal value of the stopping
problem is equal to $r^\star$.

The argument also covers heavy-tailed payoff distributions, provided
that their positive part satisfies the standing assumption
$\E[X^+]<\infty$. The negative part of the stopped payoff may have
infinite expectation, in which case the average payoff is assigned
the value $-\infty$.

\begin{Remark}
More generally, the model may allow for a sequence of nonnegative
random service times having the same distribution as a random
variable $Y$, independent of the observation sequence, with finite
mean
\[
\mu=\mathbb{E}[Y]\in(0,\infty).
\]
The service time is assumed not to be known before the acceptance
decision. In this case, the length of a cycle is $\tau+Y$, and the
average-payoff criterion becomes
\[
g_Y(\tau)
=
\frac{\mathbb{E}[X_\tau]}
     {\mathbb{E}[\tau+Y]}
=
\frac{\mathbb{E}[X_\tau]}
     {\mathbb{E}[\tau]+\mu}.
\]

For the threshold rule $\tau_r$, this gives
\[
\operatorname{rap}_{\mu}(r)
=
\frac{N(r)}{1+\mu S(r)}.
\]
The optimal threshold $r_\mu^\star$ is the unique nonnegative
solution of
\[
r_\mu^\star
=
\mu\,\mathbb{E}\bigl[(X-r_\mu^\star)^+\bigr],
\]
and the corresponding optimal average payoff is
\[
\sup_{r:\,S(r)>0}\operatorname{rap}_{\mu}(r)
=
\operatorname{rap}_{\mu}(r_\mu^\star)
=
\frac{r_\mu^\star}{\mu}.
\]
Thus, the threshold structure of the solution is preserved, although
for $\mu\neq1$ the optimal threshold is no longer numerically equal
to the optimal average payoff. For $\mu=1$, we recover the equations
used in the main model.

This extension relies on the assumption that the service time is
independent of the accepted payoff and is not observed before the
acceptance decision. If the service time depends on the accepted
opportunity or is known in advance, the optimal rule may depend on
both the payoff and the service time and requires a separate analysis.
\end{Remark}

To illustrate the dependence of the optimal reservation threshold on the
reward distribution, Table~\ref{tab:selected-thresholds} presents the
corresponding values for five standard distributions: uniform, exponential,
gamma, Weibull, and normal. The parameters of all distributions were chosen
so that their expected value was equal to~$1$. In each case, the optimal
threshold is determined by the fixed-point equation
\[
\mathbb{E}\bigl[(X-r^\star)^+\bigr]=r^\star.
\]

\begin{center}
\begin{minipage}{0.98\linewidth}
\centering

\captionof{table}{
Optimal reservation thresholds for selected reward distributions with
expected value equal to~$1$. The symbols $\varphi$ and $\Phi$ denote the
probability density function and cumulative distribution function of the
standard normal distribution, respectively.
}
\label{tab:selected-thresholds}

\small
\renewcommand{\arraystretch}{1.35}

\begin{tabular}{
    @{}
    p{0.38\linewidth}
    p{0.42\linewidth}
    c
    @{}
}
\toprule
\textbf{Reward distribution}
&
\textbf{Equation determining \(r^\star\)}
&
\(\boldsymbol{r^\star}\)
\\
\midrule

Uniform distribution
\(\mathcal{U}(0,2)\)
&
\(\displaystyle
\frac{(2-r^\star)^2}{4}=r^\star
\)
&
\(0.535898\)
\\[1ex]

Exponential distribution
with rate parameter \(1\)
&
\(\displaystyle
e^{-r^\star}=r^\star
\)
&
\(0.567143\)
\\[1ex]

Gamma distribution
with shape parameter \(2\) and rate parameter \(2\)
&
\(\displaystyle
(r^\star+1)e^{-2r^\star}=r^\star
\)
&
\(0.530045\)
\\[1ex]

Weibull distribution
with shape parameter \(2\) and scale parameter
\(\displaystyle 2/\sqrt{\pi}\)
&
\(\displaystyle
\operatorname{erfc}
\left(
\frac{\sqrt{\pi}}{2}r^\star
\right)
=r^\star
\)
&
\(0.517005\)
\\[1ex]

Normal distribution
with expected value \(1\) and variance \(1\),
\(\mathcal{N}(1,1)\)
&
\(\displaystyle
\varphi(r^\star-1)
+(1-r^\star)\Phi(1-r^\star)
=r^\star
\)
&
\(0.618425\)
\\

\bottomrule
\end{tabular}
\end{minipage}
\end{center}

Although all five distributions have the same expected reward, their
corresponding optimal reservation thresholds differ. Among the cases
considered, the largest threshold value is obtained for the normal
distribution with variance equal to~$1$. This shows that the optimal
threshold depends on the entire reward distribution, including its
dispersion, support, and tail behavior, not only on its expected value.

\section{Multi-armed index model}\label{secW}
The multi-armed bandit model can be interpreted as a simplified market model in which
individual arms correspond to different sources of investment opportunities. An arm may be
a specific asset, trading strategy, market sector, asset class, or a particular
method of capital allocation. Activating a given arm leads to the observation of the next value
generated by this source. This is the value of the current investment opportunity, for
example, the potential net profit from a transaction, the rate of return, or a more broadly understood payoff
associated with a given opportunity. Accepting this value means realizing the investment opportunity,
whereas rejecting it means giving up this possibility and continuing the search.
Since a rejected observation does not generate a payoff, but only lengthens the cycle time, the
multi-armed problem can be interpreted as a selection process among competing market
opportunities. In this interpretation, the index of each arm measures the attractiveness of a given source
of investment opportunities relative to the value of further waiting.

We consider a finite set of arms
\[
\mathcal I=\{1,\dots,m\}.
\]
Each arm $i\in\mathcal I$ corresponds to an i.i.d. sequence
\[
X^{(i)}_1,X^{(i)}_2,\dots
\]
with the same distribution as the random variable $X^{(i)}$. At a decision time, the investor selects
one arm and observes one offer from this arm. After the observation, the investor may:
\begin{enumerate}
    \item accept the offer and end the cycle with a reward equal to the observed value,
    \item reject the offer and, at the next time, choose any arm again.
\end{enumerate}
After acceptance, there is a deterministic service time equal to $1$.

\begin{Definition}
For a fixed arm $i\in\mathcal I$, we define its reservation index by
\[
r_i^\star \in [0,\infty),
\qquad
\E\big[(X^{(i)}-r_i^\star)^+\big]=r_i^\star,
\]
provided that $\E[(X^{(i)})^+]<\infty$ and $\Pp(X^{(i)}\ge 0)>0$.
\end{Definition}

\begin{Remark}
By the one-armed analysis, for each arm $i$ the number $r_i^\star$ exists,
is uniquely determined, and satisfies
\[
r_i^\star=\sup_{r:\,S_i(r)>0}\operatorname{rap}_i(r),
\qquad
\operatorname{rap}_i(r):=\frac{N_i(r)}{1+S_i(r)},
\]
where
\[
S_i(r):=\Pp(X^{(i)}\ge r),
\qquad
N_i(r):=\E\big[X^{(i)}\mathbf 1_{\{X^{(i)}\ge r\}}\big].
\]
\end{Remark}

\begin{Definition}
Let
\[
r_{\max}:=\max_{i\in\mathcal I} r_i^\star.
\]
An arm $i^\star\in\mathcal I$ is called an index-optimal arm if
\[
r_{i^\star}^\star=r_{\max}.
\]
\end{Definition}

In the described model, the reservation indices \(r_i^\star\) can be treated as
values that order the arms according to their attractiveness. 
If, after each rejection, any arm can be freely chosen, there are no
switching costs or learning effects, and the observations generated by different
arms are independent, then the natural choice is to activate the arm with
the largest reservation index.

For each arm \(i\), we have \(r_{\max}\ge r_i^\star\), and therefore
\[
\E\big[(X^{(i)}-r_{\max})^+\big]\le r_{\max}.
\]
This means that no arm gives an expected excess above the level
\(r_{\max}\) that would justify choosing an arm with a lower index. On the other
hand, every arm \(i^\star\) satisfying
\[
r_{i^\star}^\star=r_{\max}
\]
attains the value \(r_{\max}\) if it is continuously activated and the threshold rule
\[
\tau^{(i^\star)}_{r_{\max}}
=
\inf\{n\ge 1:\ X_n^{(i^\star)}\ge r_{\max}\}
\]
is applied to it.
In this sense, the multi-armed selection problem reduces to comparing the reservation indices
\(r_1^\star,\dots,r_m^\star\). Since no arm gives an expected excess above the level $r_{\max}$ greater than the cost of further waiting, mixing or switching between arms cannot improve the value attained by the arm with the largest index.

\section{Learning the reservation threshold with an unknown distribution}\label{ucz}

\subsection{General scheme for learning the reservation threshold}

In the preceding considerations, we assumed that the distribution of the random variable \(X\) is known, which makes it possible to determine the optimal threshold \(r^\star\) as the solution of a fixed-point equation. In market practice, however, this assumption is too strong. The investor observes only successive realizations of investment opportunities and, on their basis, must gradually learn the value of a single arm. In the proposed model, this learning consists in estimating the one-dimensional decision parameter, namely the reservation threshold $r$, by a trial-and-adjustment method. In practice, this proceeds as follows: the investor adopts a certain threshold $r_k$, applies the corresponding decision rule for some time, and then checks what average payoff per cycle has been achieved in this way. If the obtained value is greater than the adopted threshold, the threshold should be increased, whereas if it is smaller, the threshold should be decreased. Repeating this procedure leads to a gradual approximation of the threshold value consistent with the observed data.

For a fixed threshold \(r_k\), we run the threshold rule
\[
\tau_{r_k}:=\inf\{n\ge 1:\ X_n\ge r_k\}
\]
for \(K_k\) consecutive cycles. Let
\[
R_{k,1},\dots,R_{k,K_k}
\]
denote the payoffs obtained in these cycles, while
\[
L_{k,1},\dots,L_{k,K_k}
\]
denote the corresponding cycle lengths. A natural estimator of the average payoff per cycle for the threshold \(r_k\) is the quotient
\[
\widehat g_k
:=
\frac{\sum_{j=1}^{K_k} R_{k,j}}
{\sum_{j=1}^{K_k} L_{k,j}}.
\]

The estimator $\widehat g_k$ is a ratio estimator and, for a finite number of
cycles $K_k$, is generally biased because the expectation of a ratio need not
equal the ratio of expectations. For a fixed threshold $r_k$, however,
successive cycles are independent and identically distributed. The strong law
of large numbers therefore implies that $\widehat g_k$ is a consistent
estimator and converges to the true average payoff per cycle $g(r_k)$ as
$K_k\to\infty$. Under additional moment assumptions, its bias is typically of
order
$\mathcal{O}(1/K_k)$ \cite{Cochran1977}.

If $K_k$ remains fixed, the bias need not vanish and the algorithm may
converge to a value different from the true threshold $r^\star$. A decreasing
learning step reduces random fluctuations, but does not by itself remove
persistent bias. Therefore, to obtain convergence to the true threshold, the
number of cycles $K_k$ should be increased appropriately as learning
progresses. Under the standard assumptions used in the analysis of stochastic
approximation algorithms, including appropriately decreasing learning steps,
control of the moments of the estimation error, stability of the iterations,
and stability of the fixed point, the vanishing of the bias as $K_k$ increases
removes one of the obstacles to convergence of the algorithm to the true
threshold $r^\star$ \cite{KushnerYin2003}. A complete proof of convergence
would, however, require a detailed verification of these conditions for the
procedure under consideration and lies beyond the scope of this paper.

We update the threshold according to the iteration
\begin{equation}\label{eq:learning_update}
r_{k+1}=r_k+\alpha_k\bigl(\widehat g_k-r_k\bigr),
\end{equation}
where \((\alpha_k)_{k\ge 0}\subset(0,1]\) is a sequence of learning step sizes. The coefficient \(\alpha_k\) determines how strongly the new estimate \(\widehat g_k\) affects the change of the threshold: for \(\alpha_k=1\), we obtain the full fixed-point iteration, whereas for \(0<\alpha_k<1\), the update has an averaged character, which may increase the stability of the procedure in stochastic settings.

The interpretation of the update \eqref{eq:learning_update} is direct. If
\[
\widehat g_k>r_k,
\]
then the current threshold is too low relative to the estimated value of the arm, and therefore the next iteration
increases the threshold. If, on the other hand,
\[
\widehat g_k<r_k,
\]
then the threshold is too high and is decreased. At the fixed point, when the estimated average payoff
per cycle agrees with the current threshold, the update no longer changes the value of the parameter.

A special case of \eqref{eq:learning_update} is the full fixed-point iteration, corresponding to the choice
\[
\alpha_k=1,
\]
for which we obtain the simple scheme
\[
r_{k+1}=\widehat g_k.
\]
Then each new threshold value is directly equal to the estimated average payoff per cycle
obtained at the previous threshold. This scheme constitutes a natural learning procedure from data:
the investor tests the current threshold, observes the payoffs and realization times generated by it,
and then adjusts the decision parameter to the empirically observed quality of the strategy.

\begin{figure}[H]
\centering

\pgfmathdeclarefunction{rap}{1}{%
  \pgfmathparse{2.718281828*#1*exp(-#1/1.6)}%
}

\pgfmathsetmacro{\rstar}{1.6}

\pgfmathsetmacro{\rzero}{0.55}
\pgfmathsetmacro{\rone}{rap(\rzero)}
\pgfmathsetmacro{\rtwo}{rap(\rone)}

\pgfmathsetmacro{\rthree}{\rstar}

\begin{tikzpicture}
\begin{axis}[
    width=0.82\textwidth,
    height=0.52\textwidth,
    xmin=0, xmax=3.7,
    ymin=0, ymax=2.05,
    axis lines=left,
    xlabel={reservation threshold $r$},
    xlabel style={yshift=-0.35cm},
    ylabel={average reward per cycle},
ylabel style={xshift=-0.65cm},
    xtick=\empty,
    ytick=\empty,
    clip=false,
    legend style={
        draw=none,
        fill=none,
        at={(0.98,0.98)},
        anchor=north east
    }
]

\addplot[
    thick,
    domain=0:3.6,
    samples=250
]
{rap(x)};
\addlegendentry{$\operatorname{rap}(r)$}

\addplot[
    dashed,
    thick,
    domain=0:2.05,
    samples=2
]
{x};
\addlegendentry{$y=r$}

\addplot[
    only marks,
    mark=*,
    mark size=1.5pt
]
coordinates {(\rstar,\rstar)};

\draw[dotted] (axis cs:\rstar,0) -- (axis cs:\rstar,\rstar);
\draw[dotted] (axis cs:0,\rstar) -- (axis cs:\rstar,\rstar);
\node[left] at (axis cs:0,\rstar) {$\widehat g_3=\operatorname{rap}(r^\star)$};


\draw[->, thick] 
    (axis cs:\rzero,0) -- 
    (axis cs:\rzero,\rone);

\draw[->, thick] 
    (axis cs:\rzero,\rone) -- 
    (axis cs:\rone,\rone);

\draw[->, thick] 
    (axis cs:\rone,\rone) -- 
    (axis cs:\rone,\rtwo);

\draw[->, thick] 
    (axis cs:\rone,\rtwo) -- 
    (axis cs:\rtwo,\rtwo);

\draw[->, thick] 
    (axis cs:\rtwo,\rtwo) -- 
    (axis cs:\rtwo,\rthree);

\draw[->, thick] 
    (axis cs:\rtwo,\rthree) -- 
    (axis cs:\rthree,\rthree);

\node[below] at (axis cs:\rzero,0) {$r_0$};
\node[below] at (axis cs:\rone,0) {$r_1$};
\node[below] at (axis cs:\rtwo,0) {$r_2$};
\node[below, xshift=14pt, yshift= 2.2pt] at (axis cs:\rthree,0) {$r_3=r^\star$};



\end{axis}
\end{tikzpicture}

\caption{Example trajectory of learning the reservation threshold. The curve $\operatorname{rap}(r)$ represents the average payoff per cycle for the threshold rule with threshold $r$. Successive steps show the iteration $r_{k+1}=\operatorname{rap}(r_k)$, which leads to the fixed point $r^\star$, satisfying $\operatorname{rap}(r^\star)=r^\star$. The convergence of the iteration itself requires additional regularity assumptions and does not follow solely from the existence of a fixed point.}
\label{fig:learning-reservation-threshold}
\end{figure}
Figure~\ref{fig:learning-reservation-threshold} presents an example
trajectory of the iteration
\[
r_{k+1}=\operatorname{rap}(r_k),
\]
leading to the fixed point \(r^\star\).
Learning in our model is considerably simpler than in general reinforcement learning \cite{PioRL2010}.
Instead of approximating a value function on an extended state space, we estimate a single number,
which simultaneously describes the value of the arm and generates the optimal threshold rule. The convergence of such a procedure, however, requires
additional regularity conditions, in particular control of the estimation error
$\widehat g_k$. This can be done through an increasing number of observed cycles $K_k$
or appropriately decreasing learning step sizes $\alpha_k$.

\subsection{Numerical illustration of the learning process}
\label{subsec:numeryczna-ilustracja-uczenia}

Figure~\ref{fig:threshold-learning} presents the process of learning the
reservation threshold from successively observed rewards following an
exponential distribution with rate parameter $\lambda=1$. We conducted
$100$ independent Monte Carlo simulations, each comprising $5000$ completed
decision cycles. In each simulation, the initial threshold value was set to
\[
r_0=0.
\]
This is a natural choice for the exponential distribution, whose values are
nonnegative. Consequently, the first reward drawn is accepted in the first
cycle.

In the $k$-th cycle, the threshold $r_{k-1}$ was used. Successive reward
values were drawn until a value not smaller than the current threshold
appeared. This value was accepted and denoted by $R_k$. The cycle length
$L_k$ was equal to the number of opportunities considered, including the
accepted opportunity, increased by one additional unit of time associated
with servicing the accepted opportunity.

The experiment used a fully sequential variant of the learning procedure, in
which the threshold was updated after each completed cycle. Let
\[
T_k=\sum_{j=1}^{k}L_j
\]
denote the total duration of the first $k$ cycles. After completion of the
$k$-th cycle, the threshold was updated according to
\[
r_k
=
\frac{\displaystyle\sum_{j=1}^{k}R_j}{T_k}.
\]
Thus, $r_k$ is the ratio of the cumulative reward obtained in the first $k$
cycles to the total duration of these cycles. The calculated value $r_k$ was
then used as the threshold in cycle $k+1$.

The above update can also be written recursively. Since
\[
\sum_{j=1}^{k-1}R_j=T_{k-1}r_{k-1},
\]
we obtain
\[
r_k
=
\frac{T_{k-1}r_{k-1}+R_k}{T_k}
=
r_{k-1}
+
\frac{L_k}{T_k}
\left(
\frac{R_k}{L_k}-r_{k-1}
\right).
\]

This update has a form analogous to the general learning scheme presented
earlier. Due to the indexing adopted here, in which the $k$-th cycle is
conducted using the threshold $r_{k-1}$, the corresponding quantities take
the form
\[
K_{k-1}=1,
\qquad
\widehat g_{k-1}=\frac{R_k}{L_k},
\qquad
\alpha_{k-1}=\frac{L_k}{T_k}.
\]
It is therefore a variant in which the update is performed after each
individual cycle and the learning step is data-dependent. The value
$\alpha_{k-1}$ is random because it depends on the length of the cycle that
has just been completed.

Each of the $100$ simulations generated its own sequence of estimates
\[
r_1,r_2,\ldots,r_{5000}.
\]
Thus, for each cycle number $k$, $100$ values of $r_k$ were obtained, one
from each independent simulation. The solid black line in the figure shows
the mean of these $100$ values. The gray region is bounded by the 10th and
90th percentiles and therefore contains the middle $80\%$ of the threshold
values obtained in the independent simulations. This region is not a
confidence interval, but a graphical illustration of the variability of the
results across individual realizations of the learning procedure. Its
narrowing means that, as the number of completed cycles increases, the
threshold estimates obtained in different simulations become increasingly
similar.

For the exponential distribution with rate parameter $\lambda=1$, the
equation determining the optimal threshold takes the form
\[
e^{-r^\star}=r^\star,
\]
and thus
\[
r^\star=W(1)\approx0.5671,
\]
where $W$ denotes the Lambert function. The horizontal dashed line in the
figure shows this exact value of the optimal threshold.

The experiment illustrates the empirical stabilization of the estimates near
the exact value $r^\star$. It does not, however, constitute a proof of
convergence of the procedure used. An analytical demonstration of convergence
for the variant with a random, data-dependent step $\alpha_k$ would require
the formulation of additional regularity conditions.

\begin{figure}[h!]
    \centering
    \includegraphics[width=0.90\textwidth]{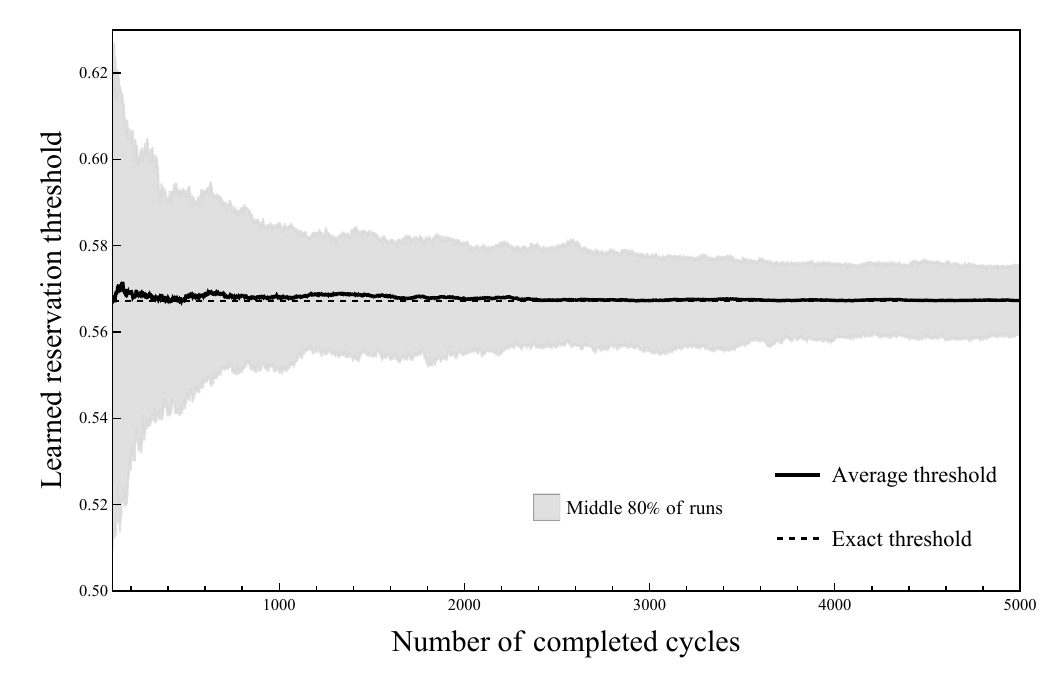}
    \caption{
    Learning the reservation threshold from rewards following an exponential
    distribution with rate parameter $\lambda=1$. Each of the $100$
    independent Monte Carlo simulations comprises $5000$ completed decision
    cycles. The solid black line shows the mean estimated threshold in the
    $100$ simulations. The gray region extends from the 10th to the 90th
    percentile and contains the middle $80\%$ of the values obtained in the
    independent simulations. The horizontal dashed line indicates the exact
    optimal threshold $r^\star=W(1)\approx0.5671$. The narrowing of the gray
    region illustrates the decreasing variation in results across independent
    realizations of the procedure. The plot begins at cycle 100. Values for
    cycles 1 to 99 were omitted because of the high variability of the initial
    estimates.
    }
    \label{fig:threshold-learning}
\end{figure}

\section{An Illustrative Model of a Bookmaker’s Bet}\label{kel12}

As an illustration of the model presented in this paper, consider a sequence of horse races. Each race is interpreted as a single investment opportunity, and its value is identified with the largest attainable logarithmic gain determined by means of the Kelly criterion \cite{Tho2011}. In the \(n\)-th race, the investor considers all available bets and, for each of them, determines the corresponding logarithmic capital increment in accordance with the Kelly criterion. Then \(X_n\) is taken to be the largest of the obtained values. Consequently, the sequence \((X_n)_{n \geq 1}\) can be treated as a sequence of observations in the sense of the one-armed model presented above. Accepting the observation \(X_n\) means placing a bet in the \(n\)-th race on the horse whose local Kelly value attains the maximum. This decision closes the current cycle and ties up capital for an additional unit of time, corresponding to the duration and settlement of the bet. Rejection, on the other hand, means skipping the current race and moving on to the next opportunity. As a result, the investor's decision again takes the form of a threshold rule.

\subsection{Single race and local Kelly value}

Consider the \(n\)-th race, in which \(m_n\) horses take part. For each \(i\in\{1,\dots,m_n\}\), denote by \(p_{n,i}\)
the subjective assessment of the probability that the \(i\)-th horse wins, and by
\(q_{n,i}\in(0,1)\) the share of funds bet on the \(i\)-th horse
in the total betting pool. We assume that
\[
\sum_{i=1}^{m_n} p_{n,i}=1
\qquad\text{and}\qquad
\sum_{i=1}^{m_n} q_{n,i}=1.
\]
We assume that there are no fees or taxes of any kind. This means that 
if the \(i\)-th horse wins, the unit stake placed on
this event is multiplied by \(1/q_{n,i}\).

To connect a single race with the binary Kelly model, for a fixed
horse \(i\) we reduce the situation to two disjoint events:
\[
A_{n,i}:=\{\text{the \(i\)-th horse wins}\},
\qquad
A_{n,-i}:=\{\text{the \(i\)-th horse loses}\}.
\]
The probability of the event \(A_{n,i}\) is \(p_{n,i}\), and that of the event \(A_{n,-i}\) is \(1-p_{n,i}\). Analogously, the market share
corresponding to these two events is equal to \(q_{n,i}\) and
\(1-q_{n,i}\), respectively.

Let \(l\in[0,1)\) denote the fraction of capital staked by the player on the victory
of the \(i\)-th horse. If this horse wins, then the player's normalized final wealth is
\[
1-l+\frac{l}{q_{n,i}}
=
1+\frac{1-q_{n,i}}{q_{n,i}}\,l\,.
\]
In the case of a loss, it is equal to \(1-l\). The expected value of the logarithmic capital increment has the form
\begin{equation}\label{Kelly}
K_{n,i}(l)
:=
p_{n,i}\ln\!\left(1+\frac{1-q_{n,i}}{q_{n,i}}\,l\right)
+
(1-p_{n,i})\ln(1-l).
\end{equation}
By the optimal stake, we shall mean the quantity
\begin{equation}\label{eq:Kelly-stake}
l_{n,i}^{\mathrm K}
:=
\arg\max_{\,0\le l<1} K_{n,i}(l).
\end{equation}
If \(p_{n,i}>q_{n,i}\), then the maximum is attained for
\begin{equation}\label{frr}
l_{n,i}^{\mathrm K}
=
\frac{p_{n,i}-q_{n,i}}{1-q_{n,i}},
\end{equation}
whereas if \(p_{n,i}\le q_{n,i}\), the optimal stake is equal to \(0\). The Kelly value for the \(i\)-th horse in the \(n\)-th race
is defined by
\begin{equation}\label{eq:kappa-ni}
\kappa_{n,i}
:=
\sup_{\,0\le l<1} K_{n,i}(l).
\end{equation}
The above quantity is a measure of the ex ante attractiveness of betting on the victory
of the \(i\)-th horse, measured by the maximal attainable expected
logarithmic capital increment in a single race. If the subjective
probability that the \(i\)-th horse wins is greater than
the probability implied by the market, i.e. \(p_{n,i}>q_{n,i}\), then,
according to the Kelly criterion, one should bet on its victory the fraction of capital
specified by formula \eqref{frr}. Otherwise, one should refrain
from placing the bet. This criterion rules out betting the entire capital: when \(l=1\),
a loss reduces the final wealth to \(0\), and the term \(\ln(1-l)\)
is not defined.

Passing from a single horse to the entire race, we define its
local value by
\begin{equation}\label{eq:Xn-Kelly}
X_n
:=
\max_{1\le i\le m_n}\kappa_{n,i}.
\end{equation}
If the above maximum is attained for more than one horse, then we choose
any index $
i_n^\star \in \arg\max_{1\le i\le m_n}\kappa_{n,i}$.

The i.i.d.\ assumption concerns the sequence of local race values $(X_n)$,
not the individual parameters of each race. The fact that the number of horses
and the subjective and market-implied probabilities vary between races is not
in itself inconsistent with this assumption. These quantities may be treated
as different realizations generated by the same probabilistic mechanism.

In real betting markets, these conditions may not be satisfied. Changes in the
number or structure of races, the method used to determine market
probabilities, the behavior of market participants, and other time-dependent
factors may change the distribution of $X_n$ or introduce dependence between
successive local values. In such cases, the theoretical results derived under
the i.i.d.\ assumption do not apply directly. The model would then have to be
extended to a non-stationary or possibly state-dependent setting in which the
thresholds could depend on time, market conditions, or the observed
characteristics of a race.

Accepting the observation \(X_n\) means placing a bet in the \(n\)-th race
on the horse \(i_n^\star\), using the stake \(l_{n,i_n^\star}^{\mathrm K}\). Rejection means skipping the current race and moving on to
the next opportunity. After acceptance, the capital remains committed for
one further unit of time, corresponding to the duration and settlement of the bet. As a result,
the sequence \((X_n)_{n\ge1}\) can be treated as a sequence of observations in
a one-armed stopping problem, and the Kelly criterion provides
a concrete construction of the local value appearing in the threshold rule.

\subsection{Reservation threshold for local Kelly values}
In the described model, to each race we assign a one-dimensional local value
$$
X_n=\max_{1\le i\le m_n}\kappa_{n,i},
$$
which determines the potential attractiveness of the best available opportunity in a given race.
We assume here that successive races are treated as independent observations, that is, that the conditional probability distribution of future outcomes is independent of what happened in the past.

In this interpretation, the function
\begin{equation}\label{ke11}
\operatorname{rap}(a)=\frac{\E[X\,\mathbf 1_{\{X\ge a\}}]}{1+\Pp(X\ge a)}
\end{equation}
describes the average logarithmic payoff per cycle obtained by a strategy that
accepts only those races for which the local Kelly value satisfies \(X_n\ge a\).

The quantity in (\ref{ke11}) can be interpreted as the quotient of the expected
value of the accepted opportunity and the expected length of the entire cycle
of waiting and settlement of the bet. 
Maximizing \(\operatorname{rap}(a)\) therefore expresses the trade-off between selectivity
and the frequency of placing a bet. The optimal threshold \(r^\star\), determined by the fixed-point equation,
is the reservation value of the local Kelly value. If
the current race has value \(X_n=r^\star\), then the investor is indifferent between
placing the best available bet in this race and continuing to wait
for the next opportunity. Hence,
the investor accepts the race if and only if $X_n\ge r^\star$.
After accepting a race, the investor chooses the horse $
i_n^\star\in\arg\max_{1\le i\le m_n}\kappa_{n,i} $
and bets on its victory the fraction of capital determined by the Kelly criterion. If \(X_n<r^\star\), then the race is skipped.

It is worth emphasizing that the Kelly criterion is not merely a technical rule for determining the stake, but a principle connecting capital growth with the information contained in the structure of the bet. The privileging of the strategy maximizing the expected value of the logarithm of wealth can be interpreted through projective symmetries, as well as in the context of its natural connection with Boltzmann--Shannon entropy and with constraints relevant to large investors \cite{Pio2007}. In this sense, the optimality of the Kelly criterion goes far beyond the question ``how much to stake?'' It is a way of describing the relationship between information, risk, and the long-run rate of capital growth.

\section{Kullback--Leibler divergence and the informational interpretation of the threshold rule}\label{dyw}

In this section, we show that in the bookmaker model based on the Kelly criterion,
the local value of an investment opportunity can be interpreted directly in terms of
the Kullback--Leibler divergence. As a result, the reservation threshold receives
an additional informational interpretation.

\subsection{Kullback--Leibler divergence for a single bet}

Let \(P\) and \(Q\) be discrete probability distributions
defined on the same set of values. We assume that \(P(x)>0\)
implies \(Q(x)>0\). The Kullback--Leibler divergence of the distribution \(P\)
relative to \(Q\) is defined by
\[
D_{\mathrm{KL}}(P\|Q)
:=
\sum_x P(x)\ln\frac{P(x)}{Q(x)},
\]
where the sum runs over all values \(x\) for which the distributions under consideration
are defined. For Bernoulli distributions \(\mathrm{Ber}(p)\) and
\(\mathrm{Ber}(q)\), where \(p,q\in(0,1)\), the above formula takes the following form:
\begin{equation}\label{eq:KL-bernoulli}
D_{\mathrm{KL}}(\mathrm{Ber}(p)\|\mathrm{Ber}(q))
=
p\ln\frac{p}{q}
+
(1-p)\ln\frac{1-p}{1-q}.
\end{equation}

In the horse-racing model, for a fixed race \(n\) and horse \(i\),
we considered the function
\begin{equation} \label{logw}
K_{n,i}(l)
=
p_{n,i}\ln\!\left(1+\frac{1-q_{n,i}}{q_{n,i}}\,l\right)
+
(1-p_{n,i})\ln(1-l),
\qquad 0\le l<1,
\end{equation}
expressing the expected logarithmic capital increment. If \(p_{n,i}>q_{n,i}\), then this function attains its maximum for
\[
l_{n,i}^{\mathrm K}
=
\frac{p_{n,i}-q_{n,i}}{1-q_{n,i}}.
\]
Substituting the above value into \eqref{logw}, we obtain
\[
1+\frac{1-q_{n,i}}{q_{n,i}}\,l_{n,i}^{\mathrm K}
=
1+\frac{p_{n,i}-q_{n,i}}{q_{n,i}}
=
\frac{p_{n,i}}{q_{n,i}},
\]
and
\[
1-l_{n,i}^{\mathrm K}
=
1-\frac{p_{n,i}-q_{n,i}}{1-q_{n,i}}
=
\frac{1-p_{n,i}}{1-q_{n,i}}.
\]
Hence
\[
\kappa_{n,i}
=
K_{n,i}(l_{n,i}^{\mathrm K})
=
p_{n,i}\ln\frac{p_{n,i}}{q_{n,i}}
+
(1-p_{n,i})\ln\frac{1-p_{n,i}}{1-q_{n,i}}
=
D_{\mathrm{KL}}(\mathrm{Ber}(p_{n,i})\|\mathrm{Ber}(q_{n,i})).
\]
If \(p_{n,i}\le q_{n,i}\), then the maximum of the function \(K_{n,i}\) on the interval
\([0,1)\) is attained for \(l=0\), and
\[
\kappa_{n,i}=K_{n,i}(0)=0.
\]
Finally,
\begin{equation}\label{KL}
\kappa_{n,i}
=
\begin{cases}
D_{\mathrm{KL}}
\left(
\operatorname{Ber}(p_{n,i})
\,\middle\|\,
\operatorname{Ber}(q_{n,i})
\right),
& \text{if } p_{n,i}\geq q_{n,i},\\[1ex]
0,
& \text{if } p_{n,i}<q_{n,i}.
\end{cases}
\end{equation}
for every \(n\) and \(i\). 

Under the restriction to nonnegative stakes, the local Kelly value
coincides with the Kullback--Leibler divergence when
$p_{n,i}\geq q_{n,i}$, with both quantities equal to zero at the
boundary $p_{n,i}=q_{n,i}$. If $p_{n,i}<q_{n,i}$, the restriction to
nonnegative stakes makes the optimal Kelly stake, and consequently
the local Kelly value, equal to zero, whereas the corresponding
Kullback--Leibler divergence is strictly positive.

In other words, under the restriction to nonnegative stakes, if
\(p_{n,i}>q_{n,i}\), the local Kelly value is equal to the Kullback--Leibler
divergence between the subjective distribution
\(\mathrm{Ber}(p_{n,i})\) and the market distribution \(\mathrm{Ber}(q_{n,i})\). The condition \(p_{n,i}>q_{n,i}\) means that the subjective probability
that the \(i\)-th horse wins is greater than the probability implied
by the market. In the player's assessment, this horse is therefore undervalued by the market, because
the market-implied probability is lower than the player's subjective
assessment of the probability of victory. In this sense, a bet on this horse's victory has, from the player's perspective, a positive edge relative to the market valuation. The optimal Kelly stake is
positive.

The local value of the entire race takes the form
\[
X_n=\max_{1\le i\le m_n}\kappa_{n,i}
=
\max_{1\le i\le m_n}
\Bigl(
\mathbf 1_{\{p_{n,i}>q_{n,i}\}}
\,D_{\mathrm{KL}}(\mathrm{Ber}(p_{n,i})\|\mathrm{Ber}(q_{n,i}))
\Bigr).
\]
Thus \(X_n\) is the largest exploitable informational value available in the \(n\)-th race
within the model of nonnegative stakes.
For horses satisfying \(p_{n,i}>q_{n,i}\), this value is equal to the corresponding
Kullback--Leibler divergence.

In the bookmaker model under consideration, the local value of a single bet $\kappa_{n,i}$ has a strict informational interpretation. It measures the exploitable divergence, under the restriction to nonnegative stakes,
between the subjective assessment of the probability of victory
and the probability implied by the market. The larger the value of \(\kappa_{n,i}\), the greater is, from the player's perspective,
the informational advantage relative to the market valuation with respect to
the given horse.

\subsection{The threshold rule as a filter of informational advantage}

In representation \eqref{KL}, the threshold rule
\[
\tau_r=\inf\{n\ge 1:\ X_n\ge r\}\,,
\]
means that the player accepts the first race in which the largest exploitable
informational value exceeds the level \(r\). In other words, the investor places a bet
only when the current informational advantage over the market is sufficiently large.

The numerator of the function \(\operatorname{rap}(r)\) measures the expected value
of the accepted informational advantage, taking into account the probability
that an opportunity exceeding the threshold \(r\) appears.

In the above context, the threshold rule can be interpreted as an informational filter accepting only those opportunities for which the divergence between the investor's model
and the market is sufficiently large. As a result, the threshold \(r^\star\) plays the role of
the minimal level of informational advantage required to place a bet.
Referring to the multi-armed model, to each arm \(i\) one can assign its own
random variable \(X^{(i)}\), interpreted as the local informational divergence
generated by a given source of market opportunities. The corresponding reservation index
\(r_i^\star\) then measures the maximal long-run informational advantage
per cycle attainable from this source. Choosing the arm with the largest \(r_i^\star\)
therefore means choosing the source that generates the highest effective
informational advantage after taking waiting time into account.

It is worth noting that the Kullback--Leibler divergence suggests a natural extension of the horse-racing
example presented above. If, in a single race, capital allocation
among all horses is allowed, rather than only the choice of one of them, then the divergence
\[
D_{\mathrm{KL}}(p_n\|q_n)
=
\sum_{i=1}^{m_n} p_{n,i}\ln\frac{p_{n,i}}{q_{n,i}},
\]
where
\[
p_n=(p_{n,1},\dots,p_{n,m_n}),
\qquad
q_n=(q_{n,1},\dots,q_{n,m_n}),
\]
constitutes a natural measure of divergence
between the investor's beliefs and the valuation made by the market.

\section{Conclusion}\label{last}

In this article, we have presented a simple analogue of the classical Gittins index approach for a one-armed decision problem with a continuous spectrum of payoffs. Unlike the original problem, the proposed model is transparent, which means that the decision mechanism is explicit. This model depends directly on the distribution of the observed variable \(X\) and has an immediate interpretation as the minimal value of an opportunity that justifies its acceptance. As in the Gittins problem, a complex problem of sequential choice is reduced to comparing a single number assigned to a given source of decisions. The index therefore plays the role of a reservation value, which makes it possible to order the available alternatives and then choose the one with the greatest current attractiveness.

The main result is to show that, in a model based on independent observations and the criterion of average payoff per cycle, the optimal decision has the natural form of a threshold rule, and the corresponding reservation threshold is determined by a fixed-point equation. This provides a well-defined and stable computational objective, thereby fitting into research on the search for simple, locally computed quantities that make it possible to plan sequential decisions. This is particularly important in situations where full dynamic programming is too complex or insufficiently transparent.


The basic results of the paper rely on the assumption that successive payoffs
are independent and have the same stationary distribution. Related simplifying
assumptions have a long tradition in classical sequential selection models
\cite{Gil,Fre,Fer}, which played an important role in the development of
optimal stopping theory and transparent stopping rules. The significance
of this assumption for the horse-racing example is discussed in
Section~\ref{kel12}. In a non-stationary environment or when successive observations are
dependent, a constant reservation threshold need not be optimal, and the
appropriate rule may depend on time, state, or the history of the process.

The model uses the criterion of expected average payoff per unit time and
therefore does not directly account for risk aversion, payoff variability,
capital constraints, or the risk of large losses. In the multi-armed version,
we also assume that sources of opportunities can be freely compared and
selected without switching costs, transaction costs, or liquidity constraints.
Including such elements could change both the value of the index and the form
of the optimal selection rule.

Another limitation is the exogenous nature of the payoff distributions. In the
model, decisions made by a participant do not affect future opportunities or
the behavior of other participants. The model therefore does not include
competition, imitation, learning by other participants, or strategic changes
in market conditions. In such situations, the value of a given source of
opportunities may depend on the common state of the market and the actions of
the other participants.

The learning procedure presented here reduces the problem to estimating a
single threshold, but a complete analysis of its convergence requires
additional conditions concerning the stability of the iterations, estimation
error, and the choice of learning steps. Extensions involving dependent or
non-stationary observations, switching costs, risk-sensitive criteria, and
strategic interactions may lead to thresholds depending on time and state and
may also require dynamic programming or numerical methods. The existence and
uniqueness of the optimal rule must then be examined separately.

It is also worth noting that the description of profits from bookmaker betting mentioned in the final sections of the article has rather problematic value for a gambler betting on horse races. This follows from the fact that knowledge of the probabilities of horses' victories is based not on statistical estimators, for which appropriate measurement data are unavailable, but on highly subjective speculations made by experts concerning the current chances of a horse in a particular race. This colorful model is nevertheless an interesting point of reference for a broad family of equivalent applications of prefix coding related to information redundancy \cite{Sha,Huf,Cov,Ris,Kri,Sht} (where data resources exist that are sufficiently rich for the required estimations). The topic indicated here, because of its breadth, requires treatment in a separate article.

\vspace{6pt}



\section*{Conflicts of Interest}
The authors declare no conflicts of interest.

\end{document}